\newtheorem{theorem}{Theorem}[section]
\newtheorem{lemma}[theorem]{Lemma}
\newtheorem{proposition}[theorem]{Proposition}
\newtheorem{corollary}[theorem]{Corollary}
\newtheorem{assumption}[theorem]{Assumption}
\newtheorem{example}[theorem]{Example}
\begin{document}
\setlength\arraycolsep{2pt}
\title[Projection Estimators of the Stationary Density of a fDE]{Projection Estimators of the Stationary Density of a Differential Equation Driven by the Fractional Brownian Motion}
\author{Nicolas MARIE$^{\dag}$}
\address{$^{\dag}$Laboratoire Modal'X, Universit\'e Paris Nanterre, Nanterre, France}
\email{nmarie@parisnanterre.fr}
\keywords{}
\date{}
\maketitle
\noindent
%


%
\begin{abstract}
The paper deals with projection estimators of the density of the stationary solution $X$ to a differential equation driven by the fractional Brownian motion under a dissipativity condition on the drift function. A model selection method is provided and, thanks to the concentration inequality for Lipschitz functionals of discrete samples of $X$ proved in Bertin et al. (2020), an oracle inequality is established for the adaptive estimator.
\end{abstract}
\tableofcontents
%


%
\section{Introduction}\label{section_introduction}
Consider the differential equation
\begin{equation}\label{main_equation}
X_t = X_0 +\int_{0}^{t}b(X_s)ds +\sigma B_t
\textrm{ $;$ }t\in\mathbb R_+,
\end{equation}
where $X_0$ is a real-valued random variable, $B = (B_t)_{t\in\mathbb R_+}$ is a fractional Brownian motion of Hurst index $H\in (0,1)$, $b :\mathbb R\rightarrow\mathbb R$ is a continuous map and $\sigma\in\mathbb R^* :=\mathbb R\backslash\{0\}$. Throughout the paper, it is assumed that Equation (\ref{main_equation}) has a unique stationary solution, which means in particular that there exists a unique random variable $X_0$ such that $X_t$ has the same distribution than $X_0$ for every $t\in\mathbb R_+$. A sufficient condition is given at Section \ref{section_preliminaries}.
\\
\\
For over two decades, many authors have investigated statistical questions related to differential equations driven by the fractional Brownian motion (fDE).\\
A large part of the papers published on statistical inference for fDEs deals with parametric estimators of the drift function $b$ when $H > 1/2$. In Kleptsyna and Le Breton \cite{KL01} and Hu and Nualart \cite{HN10}, continuous-time estimators of the drift parameter in Langevin's equation are studied. Kleptsyna and Le Breton \cite{KL01} provide a maximum likelihood estimator, where the stochastic integral with respect to the solution to Equation (\ref{main_equation}) returns to an It\^o integral. In \cite{TV07}, Tudor and Viens extend this estimator to equations with a drift function depending linearly on the unknown parameter. On the maximum likelihood estimator in fDEs with multiplicative noise, see Mishura and Ralchenko \cite{MR14}. Hu and Nualart \cite{HN10} provide a least squares estimator, where the stochastic integral with respect to the solution of Equation (\ref{main_equation}) is taken in the sense of Skorokhod. In \cite{HNZ19}, Hu, Nualart and Zhou extend this estimator to equations with a drift function depending linearly on the unknown parameter. Tindel and Neuenkirch \cite{NT14} provide a discrete-time least squares type estimator defined by an objective function allowing to make use of the main result of Tudor and Viens \cite{TV09} on the rate of convergence of the quadratic variation of the fractional Brownian motion. In \cite{PTV20}, Panloup, Tindel and Varvenne extend the results of \cite{NT14} under much more flexible conditions.\\
More recently, nonparametric methods were investigated to estimate the drift function $b$ in Equation (\ref{main_equation}). For instance, Saussereau \cite{SAUSSEREAU14} and Comte and Marie \cite{CM19} study the consistency of continuous-time Nadaraya-Watson type estimators of $b$.
\\
\\
The common point of all the references mentioned above is that the existence and uniqueness of the stationary solution to Equation (\ref{main_equation}) is required. Even if to estimate the distribution of the stationary solution is not necessary to study estimators of $b$, this is a very important question already investigated via kernel based methods in Bertin et al. \cite{BKPV20}. Precisely, when the stationary solution has a density $f$ with respect to Lebesgue's measure, the authors establish a risk bound on Parzen's estimator of $f$ and provide an oracle inequality for an adaptive estimator obtained via a Goldenshluger-Lepski type method. This is a nice application of a powerful concentration inequality for Lipschitz functionals of discrete samples of $X$ also established by Bertin et al. in \cite{BKPV20}.
\\
\\
Let $\mathcal S_m$ be the vector space generated by an orthonormal family $\mathcal B_m =\{\varphi_1,\dots,\varphi_m\}$ of $\mathbb L^2(I,dr)$, where $I\subset\mathbb R$ is an interval, and consider $t_0,\dots,t_n > 0$ such that $t_i := i\Delta_n$ for every $i\in\{1,\dots,n\}$, $\Delta_n > 0$,
\begin{displaymath}
\lim_{n\rightarrow\infty}\Delta_n = 0
\quad {\rm and}\quad
\lim_{n\rightarrow\infty}n\Delta_n =\infty.
\end{displaymath}
Our paper deals with the following projection estimator of $f$:
\begin{displaymath}
\widehat f_m =\widehat f_{m,n} :=
\sum_{j = 1}^{m}[\widehat\theta_{m,n}]_j\varphi_j
\quad {\rm with}\quad
\widehat\theta_{m,n} :=
\left(\frac{1}{n}\sum_{i = 1}^{n}\varphi_j(X_{t_i})\right)_{j\in\{1,\dots,m\}}.
\end{displaymath}
Precisely, a concentration inequality on the supremum of the empirical process is derived from Bertin et al. \cite{BKPV20}, Theorem 1 in Section \ref{section_preliminaries}. Then, a risk bound on $\widehat f_m$ is established in Section \ref{section_risk_bound}. Section \ref{section_model_selection} deals with a model selection method and an oracle inequality for the adaptive estimator. Finally, some basic numerical experiments are provided at Section \ref{numerical_section}.
\\
As in the {\it i.i.d. context}, the main advantage of the projection based approach for one-dimensional fDEs is that the optimization problem defining the model selection method is numerically easier to solve than the one defining the Goldenshluger-Lepski method because it involves only one variable. Note also that in the kernel based approach, even in dimension $1$ in the {\it i.i.d. context}, there is no simple model selection method as (\ref{model_selection_problem}). This is the main advantage of the adaptive estimator studied in this paper with respect to the kernel-based adaptive estimator of Bertin et al. \cite{BKPV20}. In fact, the same advantage than in the {\it i.i.d. context} (see the remarks at the end of Section \ref{section_model_selection} for details).
\\
\\
\textbf{Notations:}
\begin{enumerate}
 \item Throughout the paper, $\mathbb R^n$ is equipped with the distance $d_1$ defined by
 \begin{displaymath}
 d_1(\mathbf x,\mathbf y) :=
 \sum_{i = 1}^{n}|\mathbf y_i -\mathbf x_i|
 \textrm{ $;$ }
 \forall\mathbf x,\mathbf y\in\mathbb R^n.
 \end{displaymath}
 \item For any metric space $E$, ${\rm Lip}(E;\mathbb R)$ is the space of Lipschitz continuous maps from $E$ into $\mathbb R$, equipped with its usual semi-norm, always denoted by $\|.\|_{\rm Lip}$ for the sake of simplicity.
 \item The space $C^0(E;\mathbb R)$ is equipped with the uniform norm, always denoted by $\|.\|_{\infty}$ for the sake of simplicity.
 \item The space $\mathbb L^2(I,dr)$ is equipped with its usual scalar product $\langle .,.\rangle$. The associated norm is denoted by $\|.\|$.
 \item Throughout the paper, $\mathbb R^I$ is identified to $\{\varphi :\mathbb R\rightarrow\mathbb R :{\rm supp}(\varphi) = I\}$.
\end{enumerate}
%


%
\section{Preliminaries: stationary solutions of the fDE}\label{section_preliminaries}
This section deals with existing results on the existence and uniqueness of the stationary solution to Equation (\ref{main_equation}) under a dissipativity condition on the drift function $b$, and then with some consequences of a concentration inequality for Lipschitz functionals of $(X_{t_1},\dots,X_{t_n})$ due to Bertin et al. \cite{BKPV20}.
%


%
\begin{assumption}\label{assumption_b}
The function $b$ belongs to $C^1(\mathbb R)$, $b'$ is bounded and there exists $\mathfrak m_b > 0$ such that
\begin{displaymath}
b'(x)\leqslant -\mathfrak m_b
\textrm{ $;$ }
\forall x\in\mathbb R.
\end{displaymath}
\end{assumption}
\noindent
Under Assumption \ref{assumption_b}, Equation (\ref{main_equation}) has a unique stationary solution $X = (X_t)_{t\in\mathbb R_+}$ (see Hairer \cite{HAIRER05}) and $X_0$ has a density $f$ with respect to Lebesgue's measure (see Bertin et al. \cite{BKPV20}, Proposition 1).
%


%
\begin{example}\label{fOU}
A famous example of fDE satisfying Assumption \ref{assumption_b} is the fractional Langevin equation ($b = -\theta{\rm Id}_{\mathbb R}$ with $\theta > 0$). Its solution is the fractional Ornstein-Uhlenbeck process and, in this case, the stationary density $f$ is Gaussian (see Cheridito et al. \cite{CKM03}).
\end{example}
\noindent
The following theorem provides a concentration inequality for Lipschitz functionals of $(X_{t_1},\dots,X_{t_n})$ (see Bertin et al. \cite{BKPV20}, Theorem 1).
%


%
\begin{theorem}\label{BKPV_inequality_fDE}
Under Assumption \ref{assumption_b}, there exists a constant $\mathfrak c_{\ref{BKPV_inequality_fDE}} > 0$, not depending on $n$, such that for every $F\in {\rm Lip}(\mathbb R^n;\mathbb R)$ and $r > 0$,
\begin{displaymath}
\mathbb P(F(X_{t_1},\dots,X_{t_n}) -\mathbb E(F(X_{t_1},\dots,X_{t_n})) > r)
\leqslant
\exp\left(
-\frac{r^2}{\mathfrak c_{\ref{BKPV_inequality_fDE}}\|F\|_{\rm Lip}^{2}n^{\mathfrak a_H}\Delta_{n}^{-\mathfrak b_H}}
\right)
\end{displaymath}
with $\mathfrak a_H = (2H)\vee 1$ and $\mathfrak b_H = 1\wedge (2 - 2H)$.
\end{theorem}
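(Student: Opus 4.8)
The statement is quoted verbatim from Bertin et al.\ \cite{BKPV20}, Theorem 1, so strictly speaking it suffices to invoke that reference; let me nonetheless sketch the argument that underlies it. The plan is to realise the random vector $(X_{t_1},\dots,X_{t_n})$ as the image of the Gaussian driving path $B$ under a deterministic solution map, and then to appeal to Gaussian concentration. Concretely, I would write $\mathbf X =\Phi(B)$, where $\Phi$ sends a driving signal to the sampled solution of (\ref{main_equation}); for $F\in{\rm Lip}(\mathbb R^n;\mathbb R)$ the composite $F\circ\Phi$ is then a functional of the Gaussian process $B$, and the classical Borell--Tsirelson--Ibragimov--Sudakov inequality provides, for any $\Psi$ that is $L$-Lipschitz with respect to the Cameron--Martin norm of $B$, the tail bound $\mathbb P(\Psi -\mathbb E\Psi > r)\leqslant\exp(-r^2/(2L^2))$. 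The whole problem thus reduces to controlling the Cameron--Martin Lipschitz constant of $F\circ\Phi$ by $\|F\|_{\rm Lip}$ times a factor of the announced order $n^{\mathfrak a_H}\Delta_n^{-\mathfrak b_H}$.

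The key deterministic input is that Assumption \ref{assumption_b} makes $\Phi$ Lipschitz with a constant that does not degrade with the time horizon. If $X$ and $\tilde X$ solve (\ref{main_equation}) with the same initial condition but driving signals $\sigma B$ and $\sigma\tilde B$, then, writing $D_t = X_t -\tilde X_t$ and differentiating (heuristically for $H>1/2$, and via the rough-path solution theory for $H\leqslant 1/2$), one is led to $\frac{d}{dt}\tfrac12 D_t^2\leqslant -\mathfrak m_b D_t^2 +\sigma D_t(\dot B_t -\dot{\tilde B}_t)$, using $(b(X_t)-b(\tilde X_t))D_t = b'(\xi_t)D_t^2\leqslant -\mathfrak m_b D_t^2$. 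A Gronwall/Young argument then shows that the effect of a perturbation of the noise on the solution decays exponentially in time at rate $\mathfrak m_b$. This dissipative contraction is exactly what lets the sampled vector depend on the noise in a stable, uniformly-in-$n$ Lipschitz manner, preventing the growing horizon $n\Delta_n\to\infty$ from destroying the estimate.

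It then remains to translate this Lipschitz estimate of $\Phi$ into a bound on the Cameron--Martin Lipschitz constant and to track its dependence on $n$ and $\Delta_n$. This is where the two exponents enter: the Cameron--Martin norm of fractional Brownian motion behaves differently below and above $H=1/2$, which is precisely the dichotomy encoded in $\mathfrak a_H =(2H)\vee 1$ and $\mathfrak b_H = 1\wedge(2-2H)$. I would estimate the norm of the increments $(B_{t_i}-B_{t_{i-1}})_i$ entering through $\Phi$, sum the contributions over the $n$ grid points, and collect the powers of $\Delta_n$ coming from the self-similarity ${\rm Var}(B_t)=t^{2H}$; the product yields the factor $n^{\mathfrak a_H}\Delta_n^{-\mathfrak b_H}$. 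Feeding $L^2\lesssim\|F\|_{\rm Lip}^2 n^{\mathfrak a_H}\Delta_n^{-\mathfrak b_H}$ into the Gaussian bound gives the claimed inequality, with $\mathfrak c_{\ref{BKPV_inequality_fDE}}$ absorbing the absolute constants and the contraction rate $\mathfrak m_b$.

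The main obstacle will be obtaining the sharp exponents rather than merely some sub-Gaussian tail: one must quantify the Cameron--Martin Lipschitz constant of $\Phi$ across all three regimes of $H$ and check that the drift-induced contraction exactly compensates the accumulation of the $n$ sampling directions. The irregular regime $H<1/2$, where the solution theory is rougher and the Cameron--Martin embedding is more delicate, is the technically heaviest point, and is the reason this concentration inequality is best borrowed from \cite{BKPV20} rather than reproved here.
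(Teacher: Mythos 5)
The paper gives no proof of Theorem \ref{BKPV_inequality_fDE}: it is imported verbatim from Bertin et al.\ \cite{BKPV20}, Theorem 1, exactly as you observe, so your invocation of that reference is precisely the paper's own treatment. Your additional sketch (Borell--Tsirelson--Ibragimov--Sudakov concentration for the Gaussian driver combined with the dissipativity-induced Lipschitz stability of the solution map, with the exponents $\mathfrak a_H$, $\mathfrak b_H$ arising from Cameron--Martin estimates) is a reasonable outline of the ideas behind the cited result, but it goes beyond anything the paper contains and there is nothing in the paper to check it against.
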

\noindent
Now, let us state a consequence of Theorem \ref{BKPV_inequality_fDE} on the (centered) empirical process, already established in Bertin et al. \cite{BKPV20} (see Corollary 1).
%


%
\begin{corollary}\label{BKPV_inequality_fDE_corollary}
Under Assumption \ref{assumption_b}, for every $\varphi\in {\rm Lip}(\mathbb R)$ and $r > 0$,
\begin{displaymath}
\mathbb P\left(\frac{1}{n}\sum_{i = 1}^{n}[\varphi(X_{t_i}) -\mathbb E(\varphi(X_{t_i}))] > r\right)
\leqslant
\exp\left(-\frac{r^2}{\mathfrak c_{\ref{BKPV_inequality_fDE}}\|\varphi\|_{\rm Lip}^{2}}
(n\Delta_n)^{\mathfrak b_H}
\right).
\end{displaymath}
\end{corollary}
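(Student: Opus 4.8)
The plan is to apply Theorem \ref{BKPV_inequality_fDE} to the single Lipschitz functional built from the empirical mean. Given $\varphi\in {\rm Lip}(\mathbb R)$, I would introduce
\begin{displaymath}
F(\mathbf x) :=\frac{1}{n}\sum_{i = 1}^{n}\varphi(\mathbf x_i)
\textrm{ $;$ }
\forall\mathbf x\in\mathbb R^n .
\end{displaymath}
With this choice, $F(X_{t_1},\dots,X_{t_n}) =\frac{1}{n}\sum_{i=1}^{n}\varphi(X_{t_i})$ and, by linearity of the expectation, $\mathbb E(F(X_{t_1},\dots,X_{t_n})) =\frac{1}{n}\sum_{i=1}^{n}\mathbb E(\varphi(X_{t_i}))$, so that the event appearing in Theorem \ref{BKPV_inequality_fDE} is exactly the one in the corollary.

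First I would control $\|F\|_{\rm Lip}$ relative to the distance $d_1$ on $\mathbb R^n$. Using the triangle inequality, the Lipschitz property of $\varphi$, and the definition of $d_1$,
\begin{displaymath}
|F(\mathbf x) - F(\mathbf y)|
\leqslant
\frac{1}{n}\sum_{i = 1}^{n}|\varphi(\mathbf x_i) -\varphi(\mathbf y_i)|
\leqslant
\frac{\|\varphi\|_{\rm Lip}}{n}\sum_{i = 1}^{n}|\mathbf x_i -\mathbf y_i|
=\frac{\|\varphi\|_{\rm Lip}}{n}d_1(\mathbf x,\mathbf y),
\end{displaymath}
which shows that $F\in {\rm Lip}(\mathbb R^n;\mathbb R)$ with $\|F\|_{\rm Lip}\leqslant n^{-1}\|\varphi\|_{\rm Lip}$. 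Since $x\mapsto e^{-r^2/x}$ is increasing on $(0,\infty)$, replacing $\|F\|_{\rm Lip}^2$ by the larger quantity $n^{-2}\|\varphi\|_{\rm Lip}^2$ in the right-hand side of Theorem \ref{BKPV_inequality_fDE} only enlarges the bound, giving
\begin{displaymath}
\mathbb P\left(\frac{1}{n}\sum_{i = 1}^{n}[\varphi(X_{t_i}) -\mathbb E(\varphi(X_{t_i}))] > r\right)
\leqslant
\exp\left(-\frac{r^2}{\mathfrak c_{\ref{BKPV_inequality_fDE}}\|\varphi\|_{\rm Lip}^{2}n^{\mathfrak a_H - 2}\Delta_{n}^{-\mathfrak b_H}}\right).
\end{displaymath}

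The conclusion then rests on the identity $\mathfrak a_H +\mathfrak b_H = 2$, which I expect to be the one point deserving an explicit (but short) verification. Indeed, when $H\leqslant 1/2$ one has $\mathfrak a_H = 1$ and $\mathfrak b_H = 1$, while when $H > 1/2$ one has $\mathfrak a_H = 2H$ and $\mathfrak b_H = 2 - 2H$; in both regimes the sum equals $2$, hence $n^{\mathfrak a_H - 2} = n^{-\mathfrak b_H}$ and consequently $n^{\mathfrak a_H - 2}\Delta_n^{-\mathfrak b_H} = (n\Delta_n)^{-\mathfrak b_H}$. Substituting this into the exponent collapses the denominator to $\mathfrak c_{\ref{BKPV_inequality_fDE}}\|\varphi\|_{\rm Lip}^2(n\Delta_n)^{-\mathfrak b_H}$ and yields precisely the stated bound. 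Overall there is no serious obstacle here: the argument is a direct specialization of Theorem \ref{BKPV_inequality_fDE}, with the only substantive ingredients being the routine Lipschitz estimate on $F$ and the exponent-matching identity $\mathfrak a_H+\mathfrak b_H=2$.
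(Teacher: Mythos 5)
Your proof is correct: the Lipschitz bound $\|F\|_{\rm Lip}\leqslant n^{-1}\|\varphi\|_{\rm Lip}$ with respect to $d_1$ and the exponent identity $2-\mathfrak a_H=\mathfrak b_H$ are exactly the ingredients needed to specialize Theorem \ref{BKPV_inequality_fDE}. The paper does not reprove this corollary (it cites Bertin et al.\ \cite{BKPV20}, Corollary 1), but your argument coincides with the one the paper gives for the supremum version, Corollary \ref{Talagrand_inequality_fDE}, which rests on the very same Lipschitz functional estimate and the same identity $2-\mathfrak a_H=\mathfrak b_H$.
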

\noindent
Finally, because Corollary \ref{BKPV_inequality_fDE_corollary} is not sufficient to establish a risk bound on the adaptive estimator at Section \ref{section_model_selection}, let us derive a concentration inequality on the supremum of the empirical process from Theorem \ref{BKPV_inequality_fDE} under the following assumption of $f$.
%


%
\begin{assumption}\label{assumption_f}
The stationary density $f$ belongs to $\mathbb L^2(\mathbb R,dr)$.
\end{assumption}
\noindent
{\bf Remark.} It is plausible that $f$ fulfills Assumption \ref{assumption_f} for a wide class of drift functions fulfilling Assumption \ref{assumption_b}, but this problem is out of the scope of the present paper. However, let us provide some examples. On the one hand, since it is Gaussian, $f$ fulfills Assumption \ref{assumption_f} when (\ref{main_equation}) is the fractional Langevin equation ($b = -\theta{\rm Id}_{\mathbb R}$ with $\theta > 0$). On the other hand, when $H = 1/2$ and $\sigma > 0$, as mentioned in Bertin et al. \cite{BKPV20} (see Remark 1),
\begin{displaymath}
f(x) =\mathfrak c_{U,\sigma}\exp\left(\frac{U(x)}{2\sigma^2}\right)
\textrm{ $;$ }
\forall x\in\mathbb R
\end{displaymath}
where $U$ is a primitive function of $b$ and $\mathfrak c_{U,\sigma}$ is a positive constant. By Assumption \ref{assumption_b}, there exist $\mathfrak c_1,\mathfrak c_2 > 0$ such that
\begin{displaymath}
U(x)\leqslant -\mathfrak m_bx^2 +\mathfrak c_1x +\mathfrak c_2
\textrm{ $;$ }
\forall x\in\mathbb R.
\end{displaymath}
Then, $f$ fulfills Assumption \ref{assumption_f}.
%


%
\begin{corollary}\label{Talagrand_inequality_fDE}
Consider $\mathcal F\subset {\rm Lip}(\mathbb R)$ such that
\begin{displaymath}
\mathfrak c_{\mathcal F} :=
\sup_{\varphi\in\mathcal F}\|\varphi\|_{\rm Lip} <\infty.
\end{displaymath}
Under Assumptions \ref{assumption_b} and \ref{assumption_f}, for every $\varphi\in\mathcal F$, consider the empirical process
\begin{displaymath}
\nu_n(\varphi) :=
\frac{1}{n}\sum_{i = 1}^{n}[\varphi(X_{t_i}) -\mathfrak m(\varphi)]
\quad\textrm{with}\quad
\mathfrak m(\varphi) :=\langle\varphi,f\rangle.
\end{displaymath}
Then, for every
\begin{displaymath}
\mathfrak h\geqslant
\mathbb E\left(\sup_{\varphi\in\mathcal F}|\nu_n(\varphi)|\right)
\end{displaymath}
and every $r > 0$,
\begin{displaymath}
\mathbb P\left(\sup_{\varphi\in\mathcal F}|\nu_n(\varphi)| -\mathfrak h > r\right)
\leqslant
\exp\left(
-\frac{r^2}{\mathfrak c_{\ref{BKPV_inequality_fDE}}\mathfrak c_{\mathcal F}^{2}}
(n\Delta_n)^{\mathfrak b_H}
\right).
\end{displaymath}
\end{corollary}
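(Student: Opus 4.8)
The plan is to apply Theorem \ref{BKPV_inequality_fDE} to the functional $F :\mathbb R^n\rightarrow\mathbb R$ defined by
\begin{displaymath}
F(x_1,\dots,x_n) :=
\sup_{\varphi\in\mathcal F}\left|\frac{1}{n}\sum_{i = 1}^{n}[\varphi(x_i) -\mathfrak m(\varphi)]\right|,
\end{displaymath}
so that $F(X_{t_1},\dots,X_{t_n}) =\sup_{\varphi\in\mathcal F}|\nu_n(\varphi)|$. The first task is to verify that $F$ is Lipschitz continuous on $(\mathbb R^n,d_1)$ and to control its semi-norm by $\mathfrak c_{\mathcal F}/n$. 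For this I would use that the supremum of a family of Lipschitz maps is Lipschitz with semi-norm bounded by the supremum of the individual semi-norms, together with the elementary estimate $|\varphi(x_i) -\varphi(y_i)|\leqslant\|\varphi\|_{\rm Lip}|x_i - y_i|$. Summing over $i$ and dividing by $n$, and using the reverse-triangle inequality $|\,|a| - |b|\,|\leqslant|a - b|$ to pass the supremum through the absolute value, I expect to obtain
\begin{displaymath}
|F(\mathbf x) - F(\mathbf y)|\leqslant
\frac{1}{n}\sup_{\varphi\in\mathcal F}\sum_{i = 1}^{n}\|\varphi\|_{\rm Lip}|x_i - y_i|
\leqslant
\frac{\mathfrak c_{\mathcal F}}{n}\,d_1(\mathbf x,\mathbf y),
\end{displaymath}
hence $\|F\|_{\rm Lip}\leqslant\mathfrak c_{\mathcal F}/n$. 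The measurability needed so that $\mathfrak m(\varphi) =\langle\varphi,f\rangle$ is a genuine centering, together with Assumption \ref{assumption_f} guaranteeing that these scalar products are finite, ensures $F$ is well defined and real-valued.

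The second step is to insert this bound into Theorem \ref{BKPV_inequality_fDE}. Applying the theorem to $F$ with $\|F\|_{\rm Lip}^2\leqslant\mathfrak c_{\mathcal F}^{2}/n^2$ gives, for every $r > 0$,
\begin{displaymath}
\mathbb P\left(F(X_{t_1},\dots,X_{t_n}) -\mathbb E(F(X_{t_1},\dots,X_{t_n})) > r\right)
\leqslant
\exp\left(
-\frac{r^2 n^2}{\mathfrak c_{\ref{BKPV_inequality_fDE}}\mathfrak c_{\mathcal F}^{2}n^{\mathfrak a_H}\Delta_{n}^{-\mathfrak b_H}}
\right).
\end{displaymath}
The exponent simplifies through $n^2/(n^{\mathfrak a_H}\Delta_n^{-\mathfrak b_H}) = n^{2 -\mathfrak a_H}\Delta_n^{\mathfrak b_H}$, and with $\mathfrak a_H = (2H)\vee 1$ and $\mathfrak b_H = 1\wedge(2 - 2H)$ one checks the algebraic identity $2 -\mathfrak a_H =\mathfrak b_H$ (indeed, for $H\leqslant 1/2$ both equal $1$, while for $H > 1/2$ both equal $2 - 2H$). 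Therefore $n^{2 -\mathfrak a_H}\Delta_n^{\mathfrak b_H} = (n\Delta_n)^{\mathfrak b_H}$, and the right-hand side becomes exactly $\exp(-r^2(n\Delta_n)^{\mathfrak b_H}/(\mathfrak c_{\ref{BKPV_inequality_fDE}}\mathfrak c_{\mathcal F}^{2}))$.

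The final step is to replace the mean $\mathbb E(F(X_{t_1},\dots,X_{t_n}))$ by any $\mathfrak h$ dominating it. Since $\mathbb E(\sup_{\varphi\in\mathcal F}|\nu_n(\varphi)|) =\mathbb E(F(X_{t_1},\dots,X_{t_n}))\leqslant\mathfrak h$, the event $\{\sup_{\varphi}|\nu_n(\varphi)| -\mathfrak h > r\}$ is contained in the event $\{F -\mathbb E(F) > r\}$, so monotonicity of probability transfers the bound, yielding the claimed inequality. The only genuinely delicate point is the Lipschitz estimate in the first step, specifically the interchange of the supremum with the absolute value and its Lipschitz semi-norm; the rest is the exponent bookkeeping that collapses $n^{2 -\mathfrak a_H}\Delta_n^{\mathfrak b_H}$ to $(n\Delta_n)^{\mathfrak b_H}$, which I would state as the key computation but not belabor.
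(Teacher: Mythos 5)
Your proposal is correct and follows essentially the same route as the paper: the same functional $F(\mathbf x) =\sup_{\varphi\in\mathcal F}|\tfrac{1}{n}\sum_i[\varphi(\mathbf x_i)-\mathfrak m(\varphi)]|$, the same Lipschitz bound $\|F\|_{\rm Lip}\leqslant\mathfrak c_{\mathcal F}/n$, the same application of Theorem \ref{BKPV_inequality_fDE} with the identity $2-\mathfrak a_H =\mathfrak b_H$, and the same event inclusion to replace $\mathbb E(F)$ by $\mathfrak h$. Your explicit case check of $2-\mathfrak a_H =\mathfrak b_H$ (both equal $1$ for $H\leqslant 1/2$, both equal $2-2H$ for $H>1/2$) is a small addition the paper leaves implicit, but the argument is otherwise identical.
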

%


%
\begin{proof}
Let $F :\mathbb R^n\rightarrow\mathbb R$ be the map defined by
\begin{displaymath}
F(\mathbf x) :=
\sup_{\varphi\in\mathcal F}
\left|\frac{1}{n}\sum_{i = 1}^{n}[\varphi(\mathbf x_i) -\mathfrak m(\varphi)]\right|
\textrm{ $;$ }
\forall\mathbf x\in\mathbb R^n.
\end{displaymath}
For any $\mathbf x,\mathbf y\in\mathbb R^n$, by the triangle inequality for the uniform norm,
\begin{eqnarray*}
 |F(\mathbf y) - F(\mathbf x)|
 & \leqslant &
 \frac{1}{n}
 \sup_{\varphi\in\mathcal F}\left|
 \sum_{i = 1}^{n}[\varphi(\mathbf y_i) -\mathfrak m(\varphi)] -
 \sum_{i = 1}^{n}[\varphi(\mathbf x_i) -\mathfrak m(\varphi)]
 \right|\\
 & \leqslant &
 \frac{1}{n}
 \sup_{\varphi\in\mathcal F}\left\{
 \sum_{i = 1}^{n}|\varphi(\mathbf y_i) -\varphi(\mathbf x_i)|\right\}
 \leqslant
 \frac{1}{n}d_1(\mathbf x,\mathbf y)
 \sup_{\varphi\in\mathcal F}\|\varphi\|_{\rm Lip}.
\end{eqnarray*}
Then, $F\in {\rm Lip}(\mathbb R^n;\mathbb R)$ and
\begin{displaymath}
\|F\|_{\rm Lip}
\leqslant
\frac{\mathfrak c_{\mathcal F}}{n}.
\end{displaymath}
Therefore, by Theorem \ref{BKPV_inequality_fDE} and since $2 -\mathfrak a_H =\mathfrak b_H$, for every $r > 0$,
\begin{eqnarray*}
 \mathbb P\left(\sup_{\varphi\in\mathcal F}|\nu_n(\varphi)| -\mathfrak h > r\right)
 & \leqslant &
 \mathbb P(F(X_{t_1},\dots,X_{t_n}) -\mathbb E(F(X_{t_1},\dots,X_{t_n})) > r)\\
 & \leqslant &
 \exp\left(
 -\frac{r^2}{\mathfrak c_{\ref{BKPV_inequality_fDE}}\mathfrak c_{\mathcal F}^{2}}
 (n\Delta_n)^{\mathfrak b_H}
 \right).
\end{eqnarray*}
\end{proof}
%


%
\section{Risk bound on the projection estimators}\label{section_risk_bound}
This section deals with a risk bound on $\widehat f_m$ obtained via the concentration inequality on the empirical process stated in Corollary \ref{BKPV_inequality_fDE_corollary}.
\\
\\
In the sequel, $f_m$ is the orthogonal projection of $f$ on $\mathcal S_m$ (in $\mathbb L^2(I,dr)$), and $\mathcal B_m$ fulfills the following assumption.
%


%
\begin{assumption}\label{assumption_basis_1}
The $\varphi_j$'s are bounded and Lipschitz continuous functions.
\end{assumption}
\noindent
Now, let us establish the main result of this section: a risk bound on $\widehat f_m$.
%


%
\begin{proposition}\label{risk_bound}
Under Assumptions \ref{assumption_b}, \ref{assumption_f} and \ref{assumption_basis_1},
\begin{displaymath}
\mathbb E(\|\widehat f_m - f\|^2)
\leqslant
\min_{t\in\mathcal S_m}\|t - f\|^2 +
2\mathfrak c_{\ref{BKPV_inequality_fDE}}\frac{mL(m)}{(n\Delta_n)^{\mathfrak b_H}}
\quad
\textrm{with}
\quad
L(m) :=\sum_{j = 1}^{m}\|\varphi_j\|_{\rm Lip}^{2}.
\end{displaymath}
\end{proposition}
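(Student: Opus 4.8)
The plan is to carry out the classical bias--variance decomposition, in the form used throughout nonparametric estimation, with the role of the usual variance control played here by the concentration inequality of Corollary \ref{BKPV_inequality_fDE_corollary}.

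First I would exploit that $\widehat f_m$ and $f_m$ both belong to $\mathcal S_m$ while $f - f_m$ is orthogonal to $\mathcal S_m$ by definition of the orthogonal projection. Since $\widehat f_m - f_m\in\mathcal S_m$, Pythagoras' theorem in $\mathbb L^2(I,dr)$ gives
\begin{displaymath}
\|\widehat f_m - f\|^2 = \|\widehat f_m - f_m\|^2 + \|f_m - f\|^2,
\end{displaymath}
and $\|f_m - f\|^2 =\min_{t\in\mathcal S_m}\|t - f\|^2$ because $f_m$ is the best $\mathcal S_m$-approximation of $f$. This is exactly the first (bias) term of the announced bound, so it remains to control $\mathbb E(\|\widehat f_m - f_m\|^2)$.

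Next, expanding in the orthonormal basis $\mathcal B_m$, I would write $\widehat f_m - f_m =\sum_{j=1}^m([\widehat\theta_{m,n}]_j -\mathfrak m(\varphi_j))\varphi_j$. By stationarity, $\mathbb E(\varphi_j(X_{t_i})) =\langle\varphi_j,f\rangle =\mathfrak m(\varphi_j)$, so each coordinate is unbiased and $[\widehat\theta_{m,n}]_j -\mathfrak m(\varphi_j) =\nu_n(\varphi_j)$; orthonormality then yields
\begin{displaymath}
\mathbb E(\|\widehat f_m - f_m\|^2) =\sum_{j=1}^m\mathbb E(\nu_n(\varphi_j)^2).
\end{displaymath}
Each $\varphi_j$ is Lipschitz by Assumption \ref{assumption_basis_1}, which is precisely what is needed to invoke Corollary \ref{BKPV_inequality_fDE_corollary} coordinate by coordinate.

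The key step, which I expect to be the only nonroutine point, is to convert the exponential deviation bound into a second-moment bound. Applying Corollary \ref{BKPV_inequality_fDE_corollary} to both $\varphi_j$ and $-\varphi_j$ (whose Lipschitz seminorms coincide) gives the two-sided estimate $\mathbb P(|\nu_n(\varphi_j)| > r)\leqslant 2\exp(-r^2(n\Delta_n)^{\mathfrak b_H}/(\mathfrak c_{\ref{BKPV_inequality_fDE}}\|\varphi_j\|_{\rm Lip}^2))$, and the layer-cake identity $\mathbb E(\nu_n(\varphi_j)^2) =\int_0^\infty\mathbb P(|\nu_n(\varphi_j)| >\sqrt u)\,du$ reduces the computation to one exponential integral, giving $\mathbb E(\nu_n(\varphi_j)^2)\leqslant 2\mathfrak c_{\ref{BKPV_inequality_fDE}}\|\varphi_j\|_{\rm Lip}^2(n\Delta_n)^{-\mathfrak b_H}$. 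Summing over $j$ and bounding each $\|\varphi_j\|_{\rm Lip}^2$ by $L(m) =\sum_{k=1}^m\|\varphi_k\|_{\rm Lip}^2$ produces the factor $m$ and the stated variance term $2\mathfrak c_{\ref{BKPV_inequality_fDE}}mL(m)(n\Delta_n)^{-\mathfrak b_H}$, which combined with the bias term closes the proof. I would note in passing that retaining $\sum_j\|\varphi_j\|_{\rm Lip}^2 = L(m)$ instead of the crude bound $\|\varphi_j\|_{\rm Lip}^2\leqslant L(m)$ would save the factor $m$ altogether; the announced inequality then holds \emph{a fortiori}. The genuine care therefore lies entirely in the two-sided tail-to-moment passage and in verifying that Assumption \ref{assumption_basis_1} legitimizes applying the concentration inequality to every basis function.
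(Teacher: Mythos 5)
Your proof is correct, but it treats the variance term by a genuinely different route than the paper. The paper argues pointwise: it decomposes $\mathbb E(|\widehat f_m(x)-f(x)|^2)$ into squared bias plus ${\rm var}(\widehat f_m(x))$, applies Corollary \ref{BKPV_inequality_fDE_corollary} to the single Lipschitz function $\varphi(x,\cdot)=\sum_{j=1}^m\varphi_j(\cdot)\varphi_j(x)$ (with the same two-sided tail plus layer-cake computation you describe), and then integrates over $x\in I$, using $\|\varphi(x,\cdot)\|_{\rm Lip}\leqslant\sum_{j=1}^m\|\varphi_j\|_{\rm Lip}|\varphi_j(x)|$ together with Cauchy--Schwarz and orthonormality; it is precisely this last step that produces the factor $mL(m)$. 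You instead work globally: Pythagoras in $\mathbb L^2(I,dr)$ plus Parseval reduce the variance to $\sum_{j=1}^m\mathbb E(\nu_n(\varphi_j)^2)$, and you apply the concentration inequality coordinate by coordinate. The two arguments rest on the same ingredients (Corollary \ref{BKPV_inequality_fDE_corollary} and the tail-to-moment integration), but yours is sharper: kept in its natural form it yields the variance bound $2\mathfrak c_{\ref{BKPV_inequality_fDE}}L(m)(n\Delta_n)^{-\mathfrak b_H}$, a factor of $m$ better than the stated one, which you then deliberately coarsen to $mL(m)$ only to match the statement. Notably, the paper itself performs exactly your coordinate-wise computation in Step 2 of the proof of Theorem \ref{oracle_inequality}, where $\mathbb E\left(\sup_{\varphi\in\mathcal F_{m\vee\overline m}}|\nu_n(\varphi)|\right)^2$ is bounded by $\sum_{j}{\rm var}(\nu_n(\varphi_j))\leqslant 2\mathfrak c_{\ref{BKPV_inequality_fDE}}L(m\vee\overline m)(n\Delta_n)^{-\mathfrak b_H}$ with no extra factor of $m$. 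What the paper's pointwise route buys is control of ${\rm var}(\widehat f_m(x))$ at each point $x$, parallel to the kernel-estimator analysis of Bertin et al.; for the integrated risk, your argument is both valid and more economical.
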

%


%
\begin{proof}
First of all, for any $x\in I$,
\begin{displaymath}
\mathbb E(|\widehat f_m(x) - f(x)|^2) =
(\mathbb E(\widehat f_m(x)) - f(x))^2 +
{\rm var}(\widehat f_m(x)).
\end{displaymath}
Let us express well the bias term, and then give a suitable bound for the variance term. On the one hand,
\begin{displaymath}
\mathbb E(\widehat f_m(x)) =
\frac{1}{n}\sum_{j = 1}^{m}\sum_{i = 1}^{n}\mathbb E(\varphi_j(X_{t_i}))\varphi_j(x) =
\sum_{j = 1}^{m}\langle f,\varphi_j\rangle\varphi_j(x) = f_m(x).
\end{displaymath}
So,
\begin{displaymath}
\int_I
(\mathbb E(\widehat f_m(x)) - f(x))^2dx =
\|f_m - f\|^2 =
\min_{t\in\mathcal S_m}\|t - f\|^2.
\end{displaymath}
On the other hand, consider
\begin{displaymath}
\varphi(x,.) :=\sum_{j = 1}^{m}\varphi_j(.)\varphi_j(x).
\end{displaymath}
By Corollary \ref{BKPV_inequality_fDE_corollary} applied to $\varphi(x,.)$,
\begin{eqnarray*}
 {\rm var}(\widehat f_m(x))
 & = &
 2\int_{0}^{\infty}r\mathbb P\left(
 \frac{1}{n}\left|\sum_{i = 1}^{n}[\varphi(x,X_{t_i}) -\mathbb E(\varphi(x,X_{t_i}))]\right| > r
 \right)dr\\
 & \leqslant &
 4\int_{0}^{\infty}r
 \exp\left(-\frac{r^2}{\mathfrak c_{\ref{BKPV_inequality_fDE}}\|\varphi(x,.)\|_{\rm Lip}^{2}}
 (n\Delta_n)^{\mathfrak b_H}
 \right)dr =
 2\mathfrak c_{\ref{BKPV_inequality_fDE}}\Gamma(1)\frac{\|\varphi(x,.)\|_{\rm Lip}^{2}}{(n\Delta_n)^{\mathfrak b_H}}.
\end{eqnarray*}
Moreover, by Jensen's inequality and since $\mathcal B_m$ is an orthonormal family of $\mathbb L^2(I,dx)$,
\begin{displaymath}
\int_I
\|\varphi(x,.)\|_{\rm Lip}^{2}dx
\leqslant
\int_I\left[\sum_{j = 1}^{m}\|\varphi_j\|_{\rm Lip}|\varphi_j(x)|\right]^2dx
\leqslant m\sum_{j = 1}^{m}\|\varphi_j\|_{\rm Lip}^{2} =
mL(m).
\end{displaymath}
Therefore,
\begin{displaymath}
\mathbb E(\|\widehat f_m - f\|^2)
\leqslant
\min_{t\in\mathcal S_m}\|t - f\|^2 +
2\mathfrak c_{\ref{BKPV_inequality_fDE}}
\frac{mL(m)}{(n\Delta_n)^{\mathfrak b_H}}.
\end{displaymath}
\end{proof}
%


%
\begin{example}\label{example_basis_1}
Assume that $I = [0,1]$ and that $\mathcal B_m$ is the trigonometric basis. Precisely, for every $x\in I$,  $\varphi_1(x) = 1$, and for every $j\in\mathbb N$ such that $2j + 1\leqslant m$, $\varphi_{2j}(x) =\sqrt 2\cos(2\pi jx)$ and $\varphi_{2j + 1}(x) =\sqrt 2\sin(2\pi jx)$. Then, there exists a constant $\mathfrak c_{\ref{example_basis_1}} > 0$, not depending on $m$ and $n$, such that
\begin{displaymath}
L(m) =\sum_{j = 1}^{m}\|\varphi_j'\|_{\infty}^{2}\leqslant\mathfrak c_{\ref{example_basis_1}}m^3.
\end{displaymath}
So, the variance term in the risk bound on $\widehat f_m$ stated in Proposition \ref{risk_bound} is of order $m^4(n\Delta_n)^{-\mathfrak b_H}$. Note that the variance term in the risk bound on Parzen's estimator of bandwidth $h > 0$ obtained in Bertin et al. \cite{BKPV20}, Proposition 3, is of same order $h^{-4}(n\Delta_n)^{-\mathfrak b_H}$.
\end{example}
%


%
\section{Model selection}\label{section_model_selection}
As in the {\it classic} projection density estimation framework, note that
\begin{displaymath}
\widehat f_m =
\arg\min_{\varphi\in\mathcal S_m}\widehat\gamma_n(\varphi)
\quad {\rm with}\quad
\widehat\gamma_n(\varphi) :=
\|\varphi\|^2 -\frac{2}{n}\sum_{i = 1}^{n}\varphi(X_{t_i})
\textrm{ $;$ }
\forall\varphi\in\mathcal S_m.
\end{displaymath}
So, for the proposal set
\begin{displaymath}
\mathcal M :=
\{m\in\mathbb N^* :
mL(m)
\leqslant (n\Delta_n)^{\mathfrak b_H}\}
\quad {\rm with}\quad
\mathbb N^* =\mathbb N\backslash\{0\},
\end{displaymath}
\noindent
and a hyper-parameter $\mathfrak K > 0$, it is natural to consider the adaptive estimator $\widehat f_{\widehat m}$ of $f$, where
\begin{equation}\label{model_selection_problem}
\widehat m =
\arg\min_{m\in\mathcal M}\{\widehat\gamma_n(\widehat f_m) + {\rm pen}(m)\}
\end{equation}
with
\begin{displaymath}
{\rm pen}(m) :=
\mathfrak K
\frac{(m + 1)L(m)}{(n\Delta_n)^{\mathfrak b_H}}
\textrm{ $;$ }
\forall m\in\mathcal M.
\end{displaymath}
In order to provide an oracle inequality for $\widehat f_{\widehat m}$ at Theorem \ref{oracle_inequality}, let us first establish the following technical lemma.
%


%
\begin{lemma}\label{lemma_oracle_inequality}
Under Assumption \ref{assumption_basis_1}, with the notations of Corollary \ref{Talagrand_inequality_fDE}, if
\begin{displaymath}
\mathcal F =\mathcal F_m :=
\{\varphi\in\mathcal S_m :\|\varphi\| = 1\},
\end{displaymath}
then
\begin{displaymath}
\mathfrak c_{\mathcal F_m}\leqslant L(m)^{1/2}.
\end{displaymath}
\end{lemma}
%


%
\begin{proof}
Consider $\varphi\in\mathcal S_m$ such that $\|\varphi\| = 1$. Then, there exist $a_1,\dots,a_m\in\mathbb R$ such that
\begin{displaymath}
\varphi =\sum_{j = 1}^{m}a_j\varphi_j
\quad {\rm and}\quad
\sum_{j = 1}^{m}a_{j}^{2} = 1.
\end{displaymath}
So,
\begin{displaymath}
\|\varphi\|_{\rm Lip}
\leqslant\sum_{j = 1}^{m}|a_j|\cdot\|\varphi_j\|_{\rm Lip}
\leqslant\left(\sum_{j = 1}^{m}a_{j}^{2}\right)^{1/2}
\left(\sum_{j = 1}^{m}\|\varphi_j\|_{\rm Lip}^{2}\right)^{1/2} =
L(m)^{1/2}.
\end{displaymath}
Therefore,
\begin{displaymath}
\mathfrak c_{\mathcal F_m} =
\sup_{\varphi\in\mathcal F_m}\|\varphi\|_{\rm Lip}
\leqslant
L(m)^{1/2}.
\end{displaymath}
\end{proof}
\noindent
In the sequel, the $\mathcal S_m$'s are nested:
%


%
\begin{assumption}\label{assumption_basis_2}
For every $m,m'\in\mathcal M$, if $m\geqslant m'$, then $\mathcal S_{m'}\subset\mathcal S_m$.
\end{assumption}
\noindent
Moreover, $L(.)$ fulfills the following assumption.
%


%
\begin{assumption}\label{assumption_L}
The map $m\mapsto L(m)$ has polynomial growth.
\end{assumption}
%


%
\begin{example}\label{example_basis_2}
On the one hand, note that Assumption \ref{assumption_basis_2} is fulfilled by several usual bases: the trigonometric basis, Hermite's basis, Laguerre's basis, etc. On the other hand, for instance, the trigonometric basis fulfills Assumption \ref{assumption_L} (see Example \ref{example_basis_1}).
\end{example}
\noindent
Now, let us establish the main result of this section: an oracle inequality for $\widehat f_{\widehat m}$.
%


%
\begin{theorem}\label{oracle_inequality}
Under Assumptions \ref{assumption_b}, \ref{assumption_f}, \ref{assumption_basis_1}, \ref{assumption_basis_2} and \ref{assumption_L}, if $\mathfrak K\geqslant 16\mathfrak c_{\ref{BKPV_inequality_fDE}}$, then there exist two positive constants $\mathfrak c_{\ref{oracle_inequality},1}$ and $\mathfrak c_{\ref{oracle_inequality},2}$, not depending on $n$, such that
\begin{displaymath}
\mathbb E(\|\widehat f_{\widehat m} - f\|^2)
\leqslant
\mathfrak c_{\ref{oracle_inequality},1}\min_{m\in\mathcal M}
\{\|f_m - f\|^2 + {\rm pen}(m)\}
+\frac{\mathfrak c_{\ref{oracle_inequality},2}}{(n\Delta_n)^{\mathfrak b_H}}.
\end{displaymath}
\end{theorem}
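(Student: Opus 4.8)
The plan is to follow the standard model-selection machinery (in the spirit of Birg\'e--Massart) adapted to the present concentration inequality. The starting point is the defining inequality of $\widehat m$: for every fixed $m\in\mathcal M$,
\begin{displaymath}
\widehat\gamma_n(\widehat f_{\widehat m}) + {\rm pen}(\widehat m)
\leqslant
\widehat\gamma_n(\widehat f_m) + {\rm pen}(m).
\end{displaymath}
First I would establish the algebraic identity relating the contrast to the $\mathbb L^2$-loss: for any $\varphi\in\mathcal S_{m'}$ one has
\begin{displaymath}
\widehat\gamma_n(\varphi) - \widehat\gamma_n(f_m)
= \|\varphi - f\|^2 - \|f_m - f\|^2 - 2\nu_n(\varphi - f_m),
\end{displaymath}
where $\nu_n$ is the centered empirical process of Corollary \ref{Talagrand_inequality_fDE}. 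Plugging $\widehat f_{\widehat m}$ and $\widehat f_m$ into this identity and using the minimality of $\widehat m$, I would obtain the basic inequality
\begin{displaymath}
\|\widehat f_{\widehat m} - f\|^2
\leqslant
\|f_m - f\|^2 + 2\nu_n(\widehat f_{\widehat m} - f_m) + {\rm pen}(m) - {\rm pen}(\widehat m).
\end{displaymath}

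Next I would control the empirical-process term. The usual device is to write $\widehat f_{\widehat m} - f_m \in \mathcal S_{m\vee\widehat m}$ (this is where Assumption \ref{assumption_basis_2} on nestedness is used) and to bound
\begin{displaymath}
\nu_n(\widehat f_{\widehat m} - f_m)
\leqslant
\|\widehat f_{\widehat m} - f_m\|\cdot
\sup_{\varphi\in\mathcal F_{m\vee\widehat m}}|\nu_n(\varphi)|,
\end{displaymath}
with $\mathcal F_{m'}=\{\varphi\in\mathcal S_{m'}:\|\varphi\|=1\}$. Applying the elementary inequality $2uv\leqslant \tfrac14 u^2 + 4v^2$ and $\|\widehat f_{\widehat m}-f_m\|^2\leqslant 2\|\widehat f_{\widehat m}-f\|^2+2\|f_m-f\|^2$ absorbs the quadratic loss into the left-hand side (keeping a fraction of it). The remaining stochastic term is $\sup_{\varphi\in\mathcal F_{m'}}|\nu_n(\varphi)|^2$ for the random index $m'=m\vee\widehat m$. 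To handle the supremum over the random model I would introduce a penalty-matched function $p(m,m')$ and split
\begin{displaymath}
\Bigl(\sup_{\varphi\in\mathcal F_{m\vee\widehat m}}|\nu_n(\varphi)|^2 - p(m,\widehat m)\Bigr)_+
\leqslant
\sum_{m'\in\mathcal M}
\Bigl(\sup_{\varphi\in\mathcal F_{m\vee m'}}|\nu_n(\varphi)|^2 - p(m,m')\Bigr)_+,
\end{displaymath}
so that the random index is replaced by a sum over the deterministic grid $\mathcal M$.

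The heart of the argument, and the step I expect to be the main obstacle, is bounding $\mathbb E[(\sup_{\varphi\in\mathcal F_{m'}}|\nu_n(\varphi)|^2 - p)_+]$ for each fixed $m'$. Here I would invoke Corollary \ref{Talagrand_inequality_fDE} with $\mathcal F=\mathcal F_{m'}$, using Lemma \ref{lemma_oracle_inequality} to get $\mathfrak c_{\mathcal F_{m'}}\leqslant L(m')^{1/2}$. This requires first a bound on the expectation $\mathfrak h=\mathbb E(\sup_{\varphi\in\mathcal F_{m'}}|\nu_n(\varphi)|)$, which I would obtain by the chaining/coordinate bound $\sup_{\varphi\in\mathcal F_{m'}}|\nu_n(\varphi)|^2\leqslant\sum_{j=1}^{m'}\nu_n(\varphi_j)^2$ together with $\mathbb E(\nu_n(\varphi_j)^2)\leqslant \mathfrak c_{\ref{BKPV_inequality_fDE}}\|\varphi_j\|_{\rm Lip}^2(n\Delta_n)^{-\mathfrak b_H}$ (itself coming from Corollary \ref{BKPV_inequality_fDE_corollary} integrated as in the proof of Proposition \ref{risk_bound}), giving $\mathfrak h^2\lesssim m'L(m')(n\Delta_n)^{-\mathfrak b_H}$. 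Then, choosing $p(m,m')=4\mathfrak h^2$ with a suitable numerical constant so that the Gaussian-type tail of Corollary \ref{Talagrand_inequality_fDE} can be integrated, I get
\begin{displaymath}
\mathbb E\Bigl[\Bigl(\sup_{\varphi\in\mathcal F_{m'}}|\nu_n(\varphi)|^2 - p(m,m')\Bigr)_+\Bigr]
\lesssim
\frac{\mathfrak c_{\mathcal F_{m'}}^2}{(n\Delta_n)^{\mathfrak b_H}}
= \frac{L(m')}{(n\Delta_n)^{\mathfrak b_H}}.
\end{displaymath}

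Finally I would sum these residual terms over $m'\in\mathcal M$. By Assumption \ref{assumption_L} ($L$ has polynomial growth) and the definition of $\mathcal M$ (which forces $\# \mathcal M$ and the sizes $L(m')$ to stay polynomially controlled relative to $(n\Delta_n)^{\mathfrak b_H}$), the sum $\sum_{m'\in\mathcal M}L(m')(n\Delta_n)^{-\mathfrak b_H}$ is bounded by a constant, producing the additive remainder $\mathfrak c_{\ref{oracle_inequality},2}(n\Delta_n)^{-\mathfrak b_H}$. The choice $\mathfrak K\geqslant 16\mathfrak c_{\ref{BKPV_inequality_fDE}}$ is exactly what is needed so that $p(m,m')\leqslant {\rm pen}(m)+{\rm pen}(m')$ up to the constant; then the $-{\rm pen}(\widehat m)$ term from the basic inequality absorbs the $p(m,\widehat m)$ contribution, and collecting the surviving $\|f_m-f\|^2+{\rm pen}(m)$ terms and taking the minimum over $m\in\mathcal M$ yields the stated oracle inequality. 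The delicate points will be the exact numerical constants in the $2uv$ splitting and in matching $p$ to ${\rm pen}$, and verifying that the summability over $\mathcal M$ genuinely follows from Assumptions \ref{assumption_L} and the definition of $\mathcal M$.
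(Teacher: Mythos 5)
Your overall architecture is exactly the paper's proof: the basic inequality from the definition of $\widehat m$ and the contrast identity, nestedness (Assumption \ref{assumption_basis_2}) to place $\widehat f_{\widehat m}-f_m$ in $\mathcal S_{m\vee\widehat m}$, the $2uv\leqslant\tfrac14u^2+4v^2$ splitting, the penalty-matched function $p(m,m')$ with the positive-part sum over the deterministic grid $\mathcal M$, and the control of each term via Corollary \ref{Talagrand_inequality_fDE}, Lemma \ref{lemma_oracle_inequality} and the coordinate bound $\sup_{\varphi\in\mathcal F_{m'}}|\nu_n(\varphi)|^2\leqslant\sum_j\nu_n(\varphi_j)^2$. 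There is, however, a genuine gap in your final step. After integrating the tail of Corollary \ref{Talagrand_inequality_fDE} you record the per-model bound
\begin{displaymath}
\mathbb E\Bigl[\Bigl(\sup_{\varphi\in\mathcal F_{m\vee m'}}|\nu_n(\varphi)|^2-p(m,m')\Bigr)_+\Bigr]
\lesssim\frac{L(m\vee m')}{(n\Delta_n)^{\mathfrak b_H}},
\end{displaymath}
i.e.\ you discard the exponential factor produced by the deviation inequality. This is too crude. Summing it over $m'\in\mathcal M$ does give a quantity bounded by a constant --- indeed $\sum_{m'\in\mathcal M}L(m')\leqslant(\max\mathcal M)\,L(\max\mathcal M)\leqslant(n\Delta_n)^{\mathfrak b_H}$ by the definition of $\mathcal M$ and the monotonicity of $L$ --- but a constant is all you get: the total residual is then $O(1)$, not $O((n\Delta_n)^{-\mathfrak b_H})$ as the theorem asserts. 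Your closing sentence ("the sum is bounded by a constant, producing the additive remainder $\mathfrak c_{\ref{oracle_inequality},2}(n\Delta_n)^{-\mathfrak b_H}$") conflates these two statements; to get the claimed remainder you would need $\sum_{m'\in\mathcal M}L(m')$ itself to be bounded, which is false (it is of order $(n\Delta_n)^{\mathfrak b_H}$).

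The exponential factor is precisely what cannot be thrown away, and it is where Assumption \ref{assumption_L} actually enters the paper's proof. The gap $\mathfrak p(m,\overline m)=\tfrac12p(m,\overline m)-\mathfrak h(m,\overline m)$ carries an extra factor $(m\vee\overline m)$ relative to $\mathfrak h$, so that, using $\mathfrak c_{\mathcal F_{m\vee\overline m}}^2\leqslant L(m\vee\overline m)$ from Lemma \ref{lemma_oracle_inequality}, the factor $(n\Delta_n)^{\mathfrak b_H}$ cancels in the exponent and the integrated tail bound retains a factor $\exp\bigl(-\tfrac{\mathfrak K}{8\mathfrak c_{\ref{BKPV_inequality_fDE}}}(m\vee\overline m)\bigr)$. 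Keeping it, the sum $\sum_{\overline m\in\mathcal M}L(m\vee\overline m)\exp\bigl(-c(m\vee\overline m)\bigr)$ is bounded by a constant independent of $n$ and $m$ thanks to the polynomial growth of $L$ (Assumption \ref{assumption_L}), and the total residual becomes this constant times $(n\Delta_n)^{-\mathfrak b_H}$, as required. Note that your construction already contains the needed gap: with $p(m,m')=4\mathfrak h^2$ and the looser bound $\mathfrak h^2\lesssim(m\vee m')L(m\vee m')(n\Delta_n)^{-\mathfrak b_H}$, the gap $p/2-\mathfrak h^2=\mathfrak h^2$ grows linearly in $m\vee m'$ after normalization by $\mathfrak c_{\mathcal F_{m\vee m'}}^2(n\Delta_n)^{-\mathfrak b_H}$. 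So the fix is local: retain the exponential term in the per-model bound, then sum; everything else in your argument goes through, up to bookkeeping of the numerical constants against the condition $\mathfrak K\geqslant16\mathfrak c_{\ref{BKPV_inequality_fDE}}$.
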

%


%
\begin{proof}
The proof is dissected in two steps. In the first one, with the same arguments than in the {\it classic} projection density estimation framework (see Comte \cite{COMTE19}, Theorem 5.2 or Massart \cite{MASSART03}, Chapter 7), it is established that for any $m\in\mathcal M$,
\begin{displaymath}
\|\widehat f_{\widehat m} - f\|^2
\leqslant
3\|f_m - f\|^2 + 4{\rm pen}(m) + R_{m,n},
\end{displaymath}
where $R_{m,n}$ is a remainder term. In the second step, it is established that $\mathbb E(R_{m,n})$ is of order $(n\Delta_n)^{-\mathfrak b_H}$ thanks to Corollary \ref{Talagrand_inequality_fDE} and Lemma \ref{lemma_oracle_inequality}.
\\
\\
{\bf Step 1.} Note that
\begin{displaymath}
\widehat\gamma_n(\widehat f_{\widehat m}) + {\rm pen}(\widehat m)
\leqslant
\widehat\gamma_n(f_m) + {\rm pen}(m)
\textrm{ $;$ }
\forall m\in\mathcal M
\end{displaymath}
and
\begin{displaymath}
\widehat\gamma_n(\varphi) -\widehat\gamma_n(\psi) =
\|\varphi - f\|^2 -\|\psi - f\|^2 - 2\nu_n(\varphi -\psi)
\textrm{ $;$ }
\forall\varphi,\psi\in {\rm Lip}(I;\mathbb R).
\end{displaymath}
Then, for any $m\in\mathcal M$, since $2uv\leqslant u^2 + v^2$ for every $u,v\in\mathbb R_+$, and since
\begin{displaymath}
\mathcal S_m +\mathcal S_{\overline m} =
\left\{\sum_{j = 1}^{m}\theta_j\varphi_j +
\sum_{j = 1}^{\overline m}\overline\theta_j\varphi_j
\textrm{ $;$ }
\theta_1,\dots,\theta_m,\overline\theta_1,\dots,\overline\theta_{\overline m}\in\mathbb R\right\}
\subset\mathcal S_{m\vee\overline m}
\textrm{ $;$ }
\forall\overline m\in\mathcal M
\end{displaymath}
by Assumption \ref{assumption_basis_2},
\begin{eqnarray*}
 \|\widehat f_{\widehat m} - f\|^2
 & \leqslant &
 \|f_m - f\|^2 + {\rm pen}(m) +
 2\cdot\frac{1}{2}\|\widehat f_{\widehat m} - f_m\|\cdot
 2\nu_n\left(\frac{\widehat f_{\widehat m} - f_m}{\|\widehat f_{\widehat m} - f_m\|}\right) -
 {\rm pen}(\widehat m)\\
 & \leqslant &
 \|f_m - f\|^2 + {\rm pen}(m) +
 \frac{1}{4}\|\widehat f_{\widehat m} - f_m\|^2 +
 4\left[\sup_{\varphi\in\mathcal F_{m\vee\widehat m}}|\nu_n(\varphi)|\right]^2 -
 {\rm pen}(\widehat m).
\end{eqnarray*}
For every $\overline m\in\mathcal M$, consider
\begin{eqnarray*}
 p(m,\overline m) & := &
 \frac{\mathfrak K}{4}\cdot
 \frac{((m\vee\overline m) + 1)L(m\vee\overline m)}{(n\Delta_n)^{\mathfrak b_H}}\\
 & \leqslant &
 \frac{1}{4}({\rm pen}(m) + {\rm pen}(\overline m)).
\end{eqnarray*}
So,
\begin{eqnarray*}
 \|\widehat f_{\widehat m} - f\|^2
 & \leqslant &
 \|f_m - f\|^2 + {\rm pen}(m) +\frac{1}{4}\|\widehat f_{\widehat m} - f_m\|^2\\
 & &
 \hspace{3cm} +
 4\left(\left[\sup_{\varphi\in\mathcal F_{m\vee\widehat m}}|\nu_n(\varphi)|\right]^2
 - p(m,\widehat m)\right)_+ +
 4p(m,\widehat m) - {\rm pen}(\widehat m)\\
 & \leqslant &
 \|f_m - f\|^2 + 2{\rm pen}(m) +
 \frac{1}{2}(\|\widehat f_{\widehat m} - f\|^2 +\|f_m - f\|^2)\\
 & &
 \hspace{3cm} +
 4\sum_{\overline m\in\mathcal M}
 \left(\left[\sup_{\varphi\in\mathcal F_{m\vee\overline m}}|\nu_n(\varphi)|\right]^2
 - p(m,\overline m)\right)_+
\end{eqnarray*}
and then,
\begin{displaymath}
\|\widehat f_{\widehat m} - f\|^2
\leqslant
3\|f_m - f\|^2 + 4{\rm pen}(m) +
8\sum_{\overline m\in\mathcal M}
 \left(\left[\sup_{\varphi\in\mathcal F_{m\vee\overline m}}|\nu_n(\varphi)|\right]^2
 - p(m,\overline m)\right)_+.
\end{displaymath}
{\bf Step 2.} For any $\overline m\in\mathcal M$, by Corollary \ref{BKPV_inequality_fDE_corollary} (as in the proof of Proposition \ref{risk_bound}), and since $\mathfrak K\geqslant 16\mathfrak c_{\ref{BKPV_inequality_fDE}}$,
\begin{eqnarray*}
 \mathbb E\left(\sup_{\varphi\in\mathcal F_{m\vee\overline m}}|\nu_n(\varphi)|\right)^2
 & \leqslant &
 \sum_{j = 1}^{m\vee\overline m}\mathbb E(\nu_n(\varphi_j)^2) =
 \sum_{j = 1}^{m\vee\overline m}{\rm var}(\nu_n(\varphi_j))\\
 & \leqslant &
 2\mathfrak c_{\ref{BKPV_inequality_fDE}}
 \frac{1}{(n\Delta_n)^{\mathfrak b_H}}
 \sum_{j = 1}^{m\vee\overline m}\|\varphi_j\|_{\rm Lip}^{2}
 \leqslant
 \frac{\mathfrak K}{8}\cdot
 \frac{L(m\vee\overline m)}{(n\Delta_n)^{\mathfrak b_H}}
 =:\mathfrak h(m,\overline m).
\end{eqnarray*}
Now, consider
\begin{displaymath}
\mathfrak p(m,\overline m) :=
\frac{1}{2}p(m,\overline m) -\mathfrak h(m,\overline m) =
\frac{\mathfrak K}{8}\cdot
\frac{(m\vee\overline m)L(m\vee\overline m)}{(n\Delta_n)^{\mathfrak b_H}} > 0.
\end{displaymath}
So,
\begin{eqnarray*}
 \left(\left[\sup_{\varphi\in\mathcal F_{m\vee\overline m}}|\nu_n(\varphi)|\right]^2
 - p(m,\overline m)\right)_+
 & \leqslant &
 \left(2\left[\sup_{\varphi\in\mathcal F_{m\vee\overline m}}|\nu_n(\varphi)| -\mathfrak h(m,\overline m)^{1/2}\right]^2 +
 2\mathfrak h(m,\overline m) - p(m,\overline m)\right)_+\\
 & = &
 2\left(\left[\sup_{\varphi\in\mathcal F_{m\vee\overline m}}|\nu_n(\varphi)| -\mathfrak h(m,\overline m)^{1/2}\right]^2 -\mathfrak p(m,\overline m)\right)_+
\end{eqnarray*}
and then,
\begin{eqnarray*}
 & &
 \mathbb E\left[\left(\left[\sup_{\varphi\in\mathcal F_{m\vee\overline m}}|\nu_n(\varphi)|\right]^2
 - p(m,\overline m)\right)_+
 \right]\\
 & &
 \hspace{3cm}\leqslant
 2\int_{0}^{\infty}
 \mathbb P\left[\left(\left[\sup_{\varphi\in\mathcal F_{m\vee\overline m}}|\nu_n(\varphi)| -\mathfrak h(m,\overline m)^{1/2}\right]^2 -\mathfrak p(m,\overline m)\right)_+ > r\right]dr\\
 & &
 \hspace{3cm} =
 2\int_{0}^{\infty}
 \mathbb P\left(\sup_{\varphi\in\mathcal F_{m\vee\overline m}}|\nu_n(\varphi)| -
 \mathfrak h(m,\overline m)^{1/2} > (r +\mathfrak p(m,m'))^{1/2}\right)dr.
\end{eqnarray*}
Thus, by Corollary \ref{Talagrand_inequality_fDE},
\begin{eqnarray*}
 \mathbb E\left[\left(\left[\sup_{\varphi\in\mathcal F_{m\vee\overline m}}|\nu_n(\varphi)|\right]^2
 - p(m,\overline m)\right)_+
 \right]
 & \leqslant &
 2\exp\left(
 -\frac{\mathfrak p(m,\overline m)(n\Delta_n)^{\mathfrak b_H}}{\mathfrak c_{\ref{BKPV_inequality_fDE}}\mathfrak c_{\mathcal F_{m\vee\overline m}}^{2}}\right)
 \int_{0}^{\infty}\exp\left(
 -\frac{r(n\Delta_n)^{\mathfrak b_H}}{\mathfrak c_{\ref{BKPV_inequality_fDE}}\mathfrak c_{\mathcal F_{m\vee\overline m}}^{2}}\right)dr\\
 & = &
 2\Gamma(1)
 \mathfrak c_{\ref{BKPV_inequality_fDE}}
 \frac{1}{(n\Delta_n)^{\mathfrak b_H}}
 \mathfrak c_{\mathcal F_{m\vee\overline m}}^{2}
 \exp\left(
 -\frac{\mathfrak p(m,\overline m)(n\Delta_n)^{\mathfrak b_H}}{\mathfrak c_{\ref{BKPV_inequality_fDE}}\mathfrak c_{\mathcal F_{m\vee\overline m}}^{2}}\right).
\end{eqnarray*}
Moreover, by Assumption \ref{assumption_L}, there exist three constants $\mathfrak c_1,\mathfrak c_2,\mathfrak c_3 > 0$, not depending on $m$ and $n$, such that
\begin{eqnarray*}
 \sum_{\overline m\in\mathcal M}
 \mathfrak c_{\mathcal F_{m\vee\overline m}}^{2}\exp\left(
 -\frac{\mathfrak p(m,\overline m)(n\Delta_n)^{\mathfrak b_H}}{\mathfrak c_{\ref{BKPV_inequality_fDE}}\mathfrak c_{\mathcal F_{m\vee\overline m}}^{2}}\right)
 & \leqslant &
 \sum_{\overline m\in\mathcal M}
 L(m\vee\overline m)\exp\left(
 -\frac{\mathfrak K}{8\mathfrak c_{\ref{BKPV_inequality_fDE}}}(m\vee\overline m)\right)\\
 & = &
 mL(m)\exp\left(
 -\frac{\mathfrak K}{8\mathfrak c_{\ref{BKPV_inequality_fDE}}}m\right)\\
 & &
 \hspace{1cm}
 +\sum_{\overline m\in\mathcal M\textrm{ $:$ }\overline m > m}
 L(\overline m)\exp\left(
 -\frac{\mathfrak K}{16\mathfrak c_{\ref{BKPV_inequality_fDE}}}\overline m\right)
 \exp\left(
 -\frac{\mathfrak K}{16\mathfrak c_{\ref{BKPV_inequality_fDE}}}\overline m\right)\\
 & \leqslant &
 \mathfrak c_1 +
 \mathfrak c_2\sum_{\overline m\in\mathcal M}\exp\left(
 -\frac{\mathfrak K}{16\mathfrak c_{\ref{BKPV_inequality_fDE}}}\overline m\right)
 \leqslant\mathfrak c_3.
\end{eqnarray*}
Therefore,
\begin{displaymath}
\mathbb E(\|\widehat f_{\widehat m} - f\|^2)
\leqslant
\min_{m\in\mathcal M}
\{3\|f_m - f\|^2 + 4{\rm pen}(m)\}
+\frac{2\mathfrak c_{\ref{BKPV_inequality_fDE}}\mathfrak c_3}{(n\Delta_n)^{\mathfrak b_H}}.
\end{displaymath}
\end{proof}
\noindent
{\bf Remarks:}
\begin{enumerate}
 \item Note that the penalty and the remainder term in the risk bound in Theorem \ref{oracle_inequality} are of same order than in Bertin et al. \cite{BKPV20}, Theorem 3.
 \item As in the {\it i.i.d. context}, since it involves only one variable, the main advantage of the projection based approach for one-dimensional fDEs is that Problem (\ref{model_selection_problem}) is numerically easier to solve than the optimization problem defining the Goldenshluger-Lepski method. Note also that in the kernel based approach, even in dimension $1$ in the {\it i.i.d. context}, there is no simple model selection method as (\ref{model_selection_problem}). Recently, in \cite{LMR17}, Lacour, Massart and Rivoirard have provided a bandwidth selection method, called PCO method, bypassing the mentioned drawbacks of Goldenshluger-Lepski's method, but to extend the PCO method to the fDE framework requires more than Bertin et al. \cite{BKPV20}, Theorem 1. An extension of the concentration inequality for $U$-statistics of order $2$ of Houdr\'e and Reynaud-Bouret \cite{HRB03} from the {\it i.i.d. context} to $(X_{t_1},\dots,X_{t_n})$ is required.
 \item Note that without additional arguments, one can extend the result of our paper to multidimensional fDEs for isotropic projection estimators. However, to extend it to anisotropic projection estimators requires a Goldenshluger-Lepski type procedure again (see Chagny \cite{CHAGNY13} for this type of method).
\end{enumerate}
%


%
\section{Basic numerical experiments}\label{numerical_section}
In this section, for $H\in [1/2,1)$, our projection estimator of the stationary density $f$ is evaluated numerically when (\ref{main_equation}) is the fractional Langevin equation ($b = -\theta{\rm Id}_{\mathbb R}$ with $\theta > 0$). In this case,
\begin{displaymath}
f(x) =
\frac{1}{\sqrt{2\pi\sigma^2\theta^{-2H}H\Gamma(2H)}}
\exp\left(-\frac{x^2}{2\sigma^2\theta^{-2H}H\Gamma(2H)}
\right)
\textrm{ $;$ }
\forall x\in\mathbb R.
\end{displaymath}
The fractional Brownian motion is simulated via the Decreusefond-Lavaud method (see Decreusefond and Lavaud \cite{DL96}) for $H = 0.5$ and $H = 0.7$ along the dissection $\{kT/n\textrm{ $;$ }k = 0,\dots,n\}$ of $[0,T]$ with $T = 100$ and $n = 10^3$. The fractional Langevin equation is simulated with the initial condition $x_0 = 5$, $\theta = 5$ and $\sigma = 1$ by using the step-$n$ Euler scheme $X^{(n)}$ defined by
\begin{displaymath}
\left\{
\begin{array}{rcl}
 X_{0}^{(n)} & = & x_0\\
 X_{k + 1}^{(n)} & = &
 X_{k}^{(n)} -\theta X_{k}^{(n)}T/n +
 \sigma (B_{t_{k + 1}} - B_{t_k})
 \textrm{ $;$ }
 k = 0,\dots,n - 1
\end{array}\right..
\end{displaymath}
Since under Assumption \ref{assumption_b} the solution to Equation (\ref{main_equation}) with initial condition $x_0$ converges pathwise and exponentially fast to its stationary solution when $t\rightarrow\infty$, for $T$ and $n$ large enough, the error induced by considering datasets generated by the Euler scheme $X^{(n)}$ is not that significant.\\
In each case ($H = 0.5$ or $H = 0.7$), for the trigonometric basis, our projection estimator is computed on 10 independent datasets along the dissection $\{j/N\textrm{ $;$ }j = -N,\dots,N\}$ of $[-1,1]$ with $N = 70$. The average MISE is provided.
\\
\\
{\bf Case $H = 0.5$.} On Figure \ref{projection_0.5}, the 5 estimations (dashed black curves) of $f$ (red curve) generated by $\widehat f_m$ are plotted for $m = 5$, leading to an average MISE of $0.013$.
\begin{figure}[!h]
\centering
\includegraphics[scale=0.5]{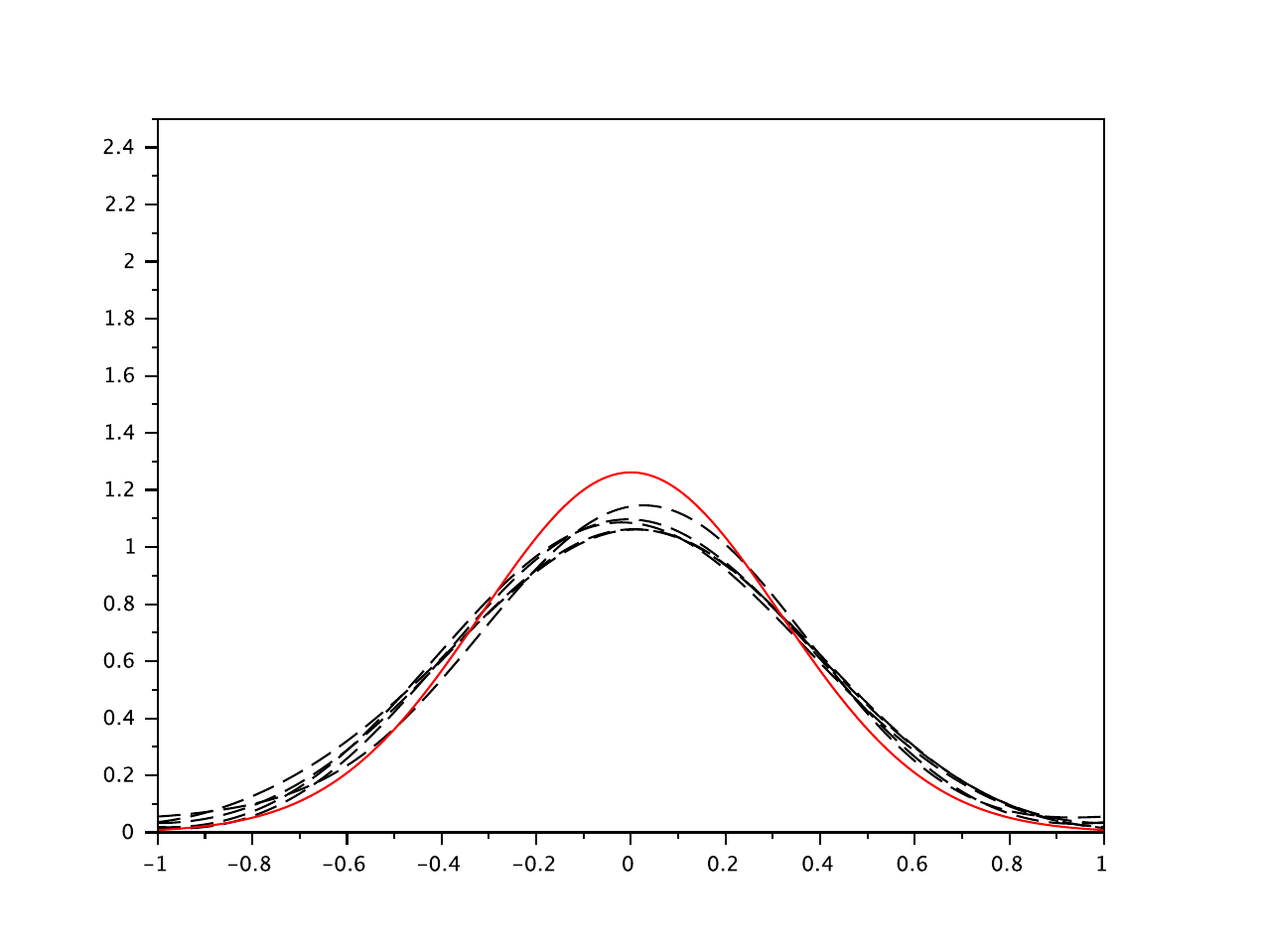} 
\caption{Projection estimations of $f$ computed for $m = 5$ on $5$ datasets with $H = 0.5$.}
\label{projection_0.5}
\end{figure}
\newline
{\bf Case $H = 0.7$.} On Figure \ref{projection_0.7}, the 5 estimations (dashed black curves) of $f$ (red curve) generated by $\widehat f_m$ are plotted for $m = 5$, leading to an average MISE of $0.044$.
\begin{figure}[!h]
\centering
\includegraphics[scale=0.5]{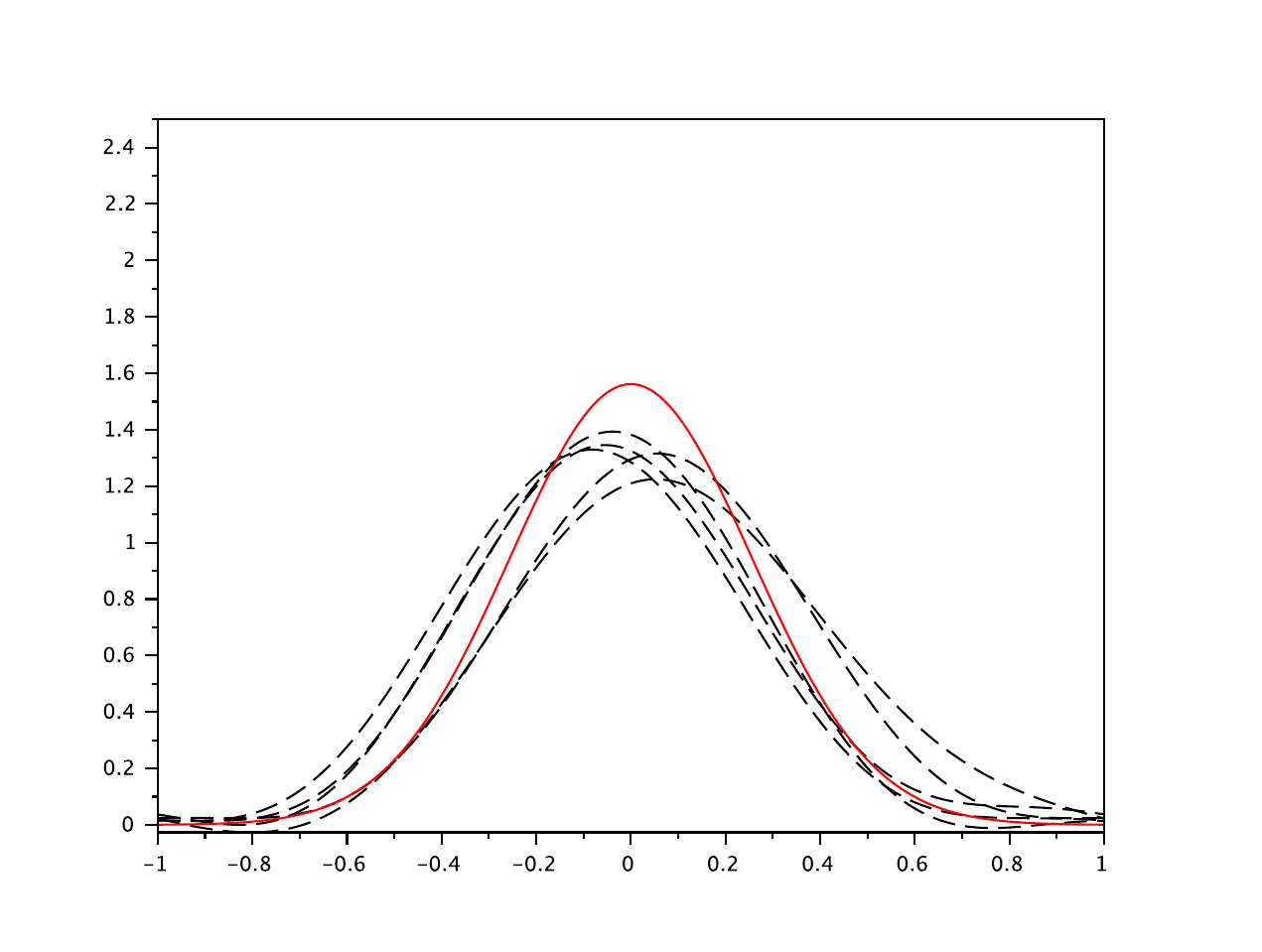} 
\caption{Projection estimations of $f$ computed for $m = 5$ on $5$ datasets with $H = 0.7$.}
\label{projection_0.7}
\end{figure}
\newline
\newline
{\bf Acknowledgments.} Thank you very much to H\'el\`ene Halconruy for her carful reading of this paper.
\newpage
%


%

%
\end{document}